\newcommand{\mb}{\mathbb}
\newcommand{\mc}{\mathcal}
\newcommand{\ol}{\overline}
\DeclareMathOperator{\Hom}{Hom}
\DeclareMathOperator{\MALG}{MALG}
\newcommand{\hra}{\hookrightarrow}
\newcommand{\B}{\mc B}
\newcommand{\N}{\mb N}
\newcommand{\R}{\mb R^+}
\newcommand{\MvN}{\mathrm{MvN}}
\newtheorem{thm}{Theorem}[section]
\newtheorem{cor}[thm]{Corollary}
\newtheorem{prop}[thm]{Proposition}
\theoremstyle{definition}
\newtheorem{defn}[thm]{Definition}
\newtheorem{eg}[thm]{Example}
\newtheorem{rmk}[thm]{Remark}
\title{Equidecomposition in cardinal algebras}
\author{Forte Shinko}
\address{Division of Physics, Mathematics and Astronomy,
California Institute of Technology,
1200 E. California Blvd.,
Pasadena,
CA 91125}
\email{fshinko@caltech.edu}
\thanks{The author was partially supported by NSF Grant DMS-1464475.}
\date{\today}
\begin{document}
\maketitle

\begin{abstract}
  Let $\Gamma$ be a countable group.
  A classical theorem of Thorisson states that
  if $X$ is a standard Borel $\Gamma$-space and $\mu$ and $\nu$ are Borel probability measures on $X$
  which agree on every $\Gamma$-invariant subset,
  then $\mu$ and $\nu$ are equidecomposable,
  i.e. there are Borel measures $(\mu_\gamma)_{\gamma\in\Gamma}$ on $X$
  such that $\mu = \sum_\gamma \mu_\gamma$ and $\nu = \sum_\gamma \gamma\mu_\gamma$.
  We establish a generalization of this result to cardinal algebras.
\end{abstract}

\section{Introduction}
In this paper,
$\Gamma$ will always denote a countable discrete group.
Let $X$ be a standard Borel $\Gamma$-space.
A classical theorem of Thorisson \cite{Tho96} in probability theory states that if $x$ and $x'$ are random variables on $X$,
then the distributions of $x$ and $x'$ agree on the $\Gamma$-invariant subsets of $X$
iff there is a shift-coupling of $x$ and $x'$,
i.e. a random variable $\gamma$ on $\Gamma$ such that $\gamma x$ and $x'$ are equal in distribution.
This characterization of shift-coupling has been applied to various areas of probabilty theory
including random rooted graphs \cite{Khe18}, Brownian motion \cite{PT15}, and point processes \cite{HS13}.

This theorem can be reformulated measure-theoretically as follows.
Let $X$ be a standard Borel $\Gamma$-space and let $\mu$ and $\nu$ be Borel probability measures on $X$.
Then $\mu$ and $\nu$ agree on every $\Gamma$-invariant set iff either of the following hold:
\begin{enumerate}
  \item There is a Borel probability measure $\lambda$ on $\Gamma\times X$ such that
    $s_*\lambda = \mu$ and $t_*\lambda = \nu$,
    where $s,t:\Gamma\times X\to X$ are the maps $s(\gamma,x) = x$ and $t(\gamma,x) = \gamma x$,
  \item There is a Borel probability measure $\lambda$ on the orbit equivalence relation
    $E^X_G := \{(x,y)\in X^2:\exists\gamma[x = \gamma y]\}$ such that $s_*\lambda = \mu$ and $t_*\lambda = \nu$,
    where $s,t:E\to X$ are the maps $s(x,y) = x$ and $t(x,y) = y$
    (see \cite[Theorem 1']{Khe18}).
\end{enumerate}
By setting $\mu_\gamma$ to be the measure on $X$ defined by $\mu_\gamma(A) := \mu(\{\gamma\}\times A)$,
we see that $\mu$ and $\nu$ agree on every $\Gamma$-invariant set
iff they are \textbf{equidecomposable},
i.e. there are Borel measures $(\mu_\gamma)_{\gamma\in\Gamma}$ on $X$
such that $\mu = \sum_\gamma \mu_\gamma$ and $\nu = \sum_\gamma \gamma\mu_\gamma$.
In this paper, we show that this statement is an instance of
a more general result about groups acting on (generalized) cardinal algebras,
a concept introduced by Tarski in \cite{Tar49},
leading to a purely algebraic proof of the statement.

A \textbf{generalized cardinal algebra (GCA)} is a set $A$ equipped with a partial binary operation $+$,
a constant $0$, and a partial $\omega$-ary operation $\sum$ subject to the following axioms,
where we use the notation $\sum_n a_n = \sum(a_n)_n$:
\begin{enumerate}
  \item
    If $\sum_n a_n$ is defined,
    then
    \[
      \sum_n a_n = a_0 + \sum_{n\ge 1} a_n.
    \]
  \item
    If $\sum_n (a_n + b_n)$ is defined,
    then
    \[
      \sum_n (a_n + b_n)
      = \sum_n a_n + \sum_n b_n.
    \]
  \item For any $a\in A$,
    we have $a + 0 = 0 + a = a$.
  \item (Refinement axiom)
    If $a + b = \sum_n c_n$,
    then there are $(a_n)_n$ and $(b_n)_n$ such that
    \[
      a = \sum_n a_n, \qquad
      b = \sum_n b_n, \qquad
      c_n = a_n + b_n.
    \]
  \item (Remainder axiom)
    If $(a_n)_n$ and $(b_n)_n$ are such that $a_n = b_n + a_{n+1}$,
    then there is $c\in A$ such that for each $n$,
    \[
      a_n = c + \sum_{i\ge n} b_i.
    \]
\end{enumerate}
These axioms imply in particular that $\sum$ is commutative:
if $\sum_n a_n$ is defined and $\pi$ is a permutation of $\N$,
then $\sum_n a_n = \sum_n a_{\pi(n)}$
(see \cite[1.38]{Tar49}).

A \textbf{cardinal algebra (CA)} is a GCA whose operations $+$ and $\sum$ are total.
Cardinal algebras were introduced by Tarski in \cite{Tar49}
to axiomatize properties of ZF cardinal arithmetic,
such as the cancellation law $n\cdot\kappa = n\cdot\lambda \implies \kappa = \lambda$.
More recently, they have been used in \cite{KM16} in the study of countable Borel equivalence relations.

Some examples of GCAs and CAs are as follows.
\begin{itemize}
    \item \cite[14.1]{Tar49}
        $\N$ and $\R$ are GCAs under addition,
        where $\R$ is the set of non-negative real numbers.
    \item \cite[2.1]{Sho90}
        If $X$ is a measurable space,
        then the set of measures on $X$ is a CA under pointwise addition.
    \item \cite[15.10]{Tar49}
        Every $\sigma$-complete,
        $\sigma$-distributive lattice is a CA under join.
        In particular, for any set $X$,
        the power set $\mc P(X)$ is a CA under union.
    \item \cite[17.2]{Tar49}
        The class of cardinals is a CA under addition
        (although strictly speaking, we require a CA to be a set).
    \item \cite[15.24]{Tar49}
        Every $\sigma$-complete Boolean algebra is a GCA under join of mutually disjoint elements.
        \begin{itemize}
            \item If $X$ is a measurable space,
                then the collection $\B(X)$ of measurable sets is a GCA under disjoint union.
            \item If $(X,\mu)$ is a measure space,
                then the measure algebra $\MALG(X,\mu)$ is a GCA under disjoint union.
        \end{itemize}
\end{itemize}

Every GCA is endowed with a relation $\le$ given by
\[
  a\le b \iff \exists c[a + c = b]
\]
This is a partial order with least element $0$
(see paragraph following \cite[5.18]{Tar49}).
Some examples of this partial order are as follows.
\begin{itemize}
  \item In $\N$ and $\R$,
    $\le$ coincides with the usual order.
  \item In the CA of measures on $X$,
    $\mu\le\nu$ iff $\mu(S)\le\nu(S)$ for every measurable $S\subset X$.
  \item In the CA induced by a $\sigma$-complete, $\sigma$-distributive lattice,
    $\le$ is the partial order induced by the lattice,
    i.e. $a\le b$ iff $a = a\wedge b$.
  \item For the class of cardinals,
    $\kappa\le\lambda$ iff there is an injection $\kappa\hra\lambda$,
    and the fact that this is a partial order is the Cantor-Schroeder-Bernstein theorem.
\end{itemize}
We say that $a\in A$ is the \textbf{meet} (resp. \textbf{join}) of a family $(a_i)_{i\in I}$,
denoted $\bigwedge a_i$ (resp. $\bigvee a_i$),
if it is the meet (resp. join) with respect to $\le$.
We write $a\perp b$ if $a\wedge b = 0$.

A homomorphism from a GCA $A$ to a GCA $B$ is a function $\phi:A\to B$ satisfying the following:
\begin{enumerate}
  \item If $a = b + c$, then $\phi(a) = \phi(b) + \phi(c)$.
  \item $\phi(0) = 0$.
  \item If $a = \sum_n a_n$, then $\phi(a) = \sum_n \phi(a_n)$.
\end{enumerate}

In this paper,
we will consider an \textbf{action} of a countable group $\Gamma$ on a GCA $A$,
i.e. a map $\Gamma\times A\to A$,
denoted $(\gamma,a)\mapsto \gamma a$,
satisfying the following:
\begin{enumerate}
  \item $\gamma(\delta a) = (\gamma\delta)a$.
  \item $1a = a$.
  \item For every $\gamma\in\Gamma$,
    the map $A\to A$ defined by $a\mapsto\gamma a$ is a homomorphism.
\end{enumerate}

A \textbf{$\Gamma$-GCA} is a GCA $A$ equipped with an action of a countable group $\Gamma$.
An element $a$ in $A$ is \textbf{$\Gamma$-invariant} if $\gamma a = a$ for every $\gamma\in\Gamma$.
We say that $a$ and $b$ in $A$ are \textbf{equidecomposable}
if there exist $(a_\gamma)_{\gamma\in\Gamma}$ in $A$ such that
$a = \sum_\gamma a_\gamma$ and $b = \sum_\gamma \gamma a_\gamma$.

The main theorem is as follows,
where a GCA $A$ is \textbf{cancellative} if for every $a,b\in A$,
if $a + b = a$, then $b = 0$.

\begin{thm}\label{ThmMain}
  Let $A$ be a cancellative $\Gamma$-GCA with binary meets,
  and let $\sim$ be an equivalence relation on $A$
  such that the following hold:
  \begin{enumerate}
    \item Equidecomposable elements are $\sim$-related.
    \item If $a\sim b$ and $a + c\sim b + d$,
      then $c\sim d$.
    \item If $a\sim b$ and $a\perp \gamma b$ for every $\gamma \in \Gamma$,
      then $a = 0$ (this implies $b = 0$, since $a\perp b$).
  \end{enumerate}
  Then $a\sim b$ iff $a$ and $b$ are equidecomposable.
\end{thm}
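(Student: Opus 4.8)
The backward implication is exactly hypothesis~(1), so the work is to prove that $a\sim b$ implies $a$ and $b$ are equidecomposable, and the plan is a greedy exhaustion argument patterned on the classical proof of Thorisson's theorem. First I would record two preliminary facts. Since each $\gamma\in\Gamma$ acts by a homomorphism whose inverse is the homomorphism given by $\gamma^{-1}$, the map $x\mapsto\gamma x$ is an automorphism of the poset $(A,\le)$ and hence preserves binary meets; and cancellativity makes a truncated difference well defined, i.e.\ for $x\le y$ there is a unique $y\ominus x$ with $x+(y\ominus x)=y$, and $\gamma(y\ominus x)=\gamma y\ominus\gamma x$. The one genuine lemma I would isolate is: \emph{if $w=x\wedge y$ then $(x\ominus w)\perp(y\ominus w)$}; indeed, if $t$ lies below both residues then $w+t\le w+(x\ominus w)=x$ and likewise $w+t\le y$, so $w+t\le x\wedge y=w$, and since $w\le w+t$ trivially we get $w+t=w$, whence $t=0$ by cancellativity. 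This is the step that uses both ``binary meets'' and cancellativity.

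Next I would fix an enumeration $\Gamma=\{\delta_0,\delta_1,\dots\}$ in which every group element occurs infinitely often and recursively construct descending sequences $a=u_0\ge u_1\ge\cdots$ and $b=v_0\ge v_1\ge\cdots$ with $u_n\sim v_n$, along with ``transported pieces'' $w_n$. Given $u_n\sim v_n$, I would move greedily as much of $u_n$ as $\delta_n$ can fit inside $v_n$: set $w_n:=u_n\wedge\delta_n^{-1}v_n$, $u_{n+1}:=u_n\ominus w_n$, and $v_{n+1}:=v_n\ominus\delta_n w_n$, which is legitimate since $\delta_n w_n\le\delta_n\delta_n^{-1}v_n=v_n$. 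As $w_n$ and $\delta_n w_n$ are trivially equidecomposable, they are $\sim$-related by~(1), so applying hypothesis~(2) to $w_n+u_{n+1}=u_n\sim v_n=\delta_n w_n+v_{n+1}$ gives $u_{n+1}\sim v_{n+1}$ and the recursion continues. Pushing the equation defining $v_{n+1}$ through $\delta_n^{-1}$ exhibits $u_{n+1}$ and $\delta_n^{-1}v_{n+1}$ as the two residues of $u_n$ and $\delta_n^{-1}v_n$ past their meet $w_n$, so the lemma yields $u_{n+1}\perp\delta_n^{-1}v_{n+1}$; since the sequences descend and meets are monotone, $u_m\perp\delta_n^{-1}v_m$ for all $m>n$.

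Finally I would read off the decomposition with the Remainder axiom. Applied to $u_n=w_n+u_{n+1}$ it gives $c\in A$ with $u_n=c+\sum_{i\ge n}w_i$ for all $n$, and applied to $v_n=\delta_n w_n+v_{n+1}$ it gives $c'$ with $v_n=c'+\sum_{i\ge n}\delta_i w_i$; in particular $c\le u_m$ and $c'\le v_m$ for every $m$. Taking $m=n+1$, $c\wedge\delta_n^{-1}c'\le u_{n+1}\wedge\delta_n^{-1}v_{n+1}=0$ for each $n$, and since $\{\delta_n^{-1}:n\in\N\}=\Gamma$ this means $c\perp\gamma c'$ for every $\gamma$; applying automorphisms also $c'\perp\gamma c$ for every $\gamma$. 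Grouping the summands by the group element they belong to (a subsum of a defined sum is again defined, and $\gamma$ commutes with $\sum$), I would put $a_\gamma:=\sum\{w_i:\delta_i=\gamma\}$, so that $\sum_\gamma a_\gamma=\sum_i w_i$ and $\sum_\gamma\gamma a_\gamma=\sum_i\delta_i w_i$; these are equidecomposable and hence $\sim$-related by~(1), so from $c+\sum_i w_i=a\sim b=c'+\sum_i\delta_i w_i$ and~(2) I get $c\sim c'$. Hypothesis~(3), applied to the pair $(c,c')$ and to the pair $(c',c)$, then forces $c=c'=0$, giving $a=\sum_\gamma a_\gamma$ and $b=\sum_\gamma\gamma a_\gamma$. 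I expect this last step---showing the leftover $c$ vanishes---to be the main obstacle, and the whole point of choosing $w_n$ greedily is to make the residues orthogonal so that this orthogonality survives in the limit and hypotheses~(2) and~(3) can finish the argument.
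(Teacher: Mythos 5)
Your proposal is correct and follows essentially the same route as the paper: greedily transfer the meet $u_n\wedge\delta_n^{-1}v_n$ at each stage, extract the leftovers $c,c'$ via the remainder axiom, show $c\perp\gamma c'$ for all $\gamma$, and finish with hypotheses (1)--(3). The only (cosmetic) difference is how the orthogonality of the leftovers is obtained --- you isolate the lemma that the two residues past a meet are orthogonal, while the paper derives $a_\infty\wedge\gamma_n b_\infty=0$ directly from an inequality chain and cancellativity; both use exactly the same hypotheses.
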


By a \textbf{finite measure} on a GCA $A$,
we mean a homomorphism from $A$ to $\R$.

\begin{cor}\label{CorMeas}
  Let $A$ be a $\Gamma$-GCA with countable joins and let $\mu$ and $\nu$ be finite measures on $A$.
  Then $\mu$ and $\nu$ agree on every $\Gamma$-invariant element of $A$ iff they are equidecomposable.
\end{cor}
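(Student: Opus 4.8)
The plan is to apply \Cref{ThmMain} to the GCA of finite measures on $A$. Write $M := \Hom(A,\R)$, with pointwise addition, the pointwise zero measure, and the operation $\sum$ declared to be the pointwise sum whenever it converges in $\R$ at every element of $A$; with the $\Gamma$-action $(\gamma\mu)(a) := \mu(\gamma^{-1}a)$, one checks that $M$ is a $\Gamma$-GCA in which $\le$ is the pointwise order --- the GCA axioms are inherited from those of $\R$ via the refinement axiom in $A$, much as in Shortt's theorem \cite[2.1]{Sho90} that the measures on a measurable space form a CA. Since $\R$ takes only finite values, $M$ is cancellative. And $M$ has binary meets: for $\mu,\nu\in M$, the assignment $a\mapsto\inf\{\mu(a_0)+\nu(a_1):a=a_0+a_1\}$ is again a finite measure (superadditivity comes from the refinement axiom of $A$, countable additivity from the $\sum$-axioms), and one checks it is $\mu\wedge\nu$ in $M$. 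Finally, ``$\mu$ and $\nu$ are equidecomposable'' in the sense of the corollary is exactly equidecomposability in the $\Gamma$-GCA $M$.

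Next define an equivalence relation $\sim$ on $M$ by $\mu\sim\nu$ iff $\mu(a)=\nu(a)$ for every $\Gamma$-invariant $a\in A$. Conditions (1) and (2) of \Cref{ThmMain} are immediate. For (1), if $\mu=\sum_\gamma\mu_\gamma$ and $\nu=\sum_\gamma\gamma\mu_\gamma$ in $M$, then for $\Gamma$-invariant $a$, $\nu(a)=\sum_\gamma\mu_\gamma(\gamma^{-1}a)=\sum_\gamma\mu_\gamma(a)=\mu(a)$. For (2), if $\mu\sim\nu$ and $\mu+\rho\sim\nu+\sigma$, then evaluating at a $\Gamma$-invariant $a$ gives $\mu(a)=\nu(a)$ and $\mu(a)+\rho(a)=\nu(a)+\sigma(a)$, hence $\rho(a)=\sigma(a)$ by cancellation in $\R$. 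Thus, once condition (3) is verified, \Cref{ThmMain} applied to $(M,\sim)$ says precisely that $\mu$ and $\nu$ agree on every $\Gamma$-invariant element of $A$ iff they are equidecomposable, which is the corollary. So everything reduces to condition (3).

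Condition (3) asks: if $\mu\sim\nu$ and $\mu\wedge\gamma\nu=0$ in $M$ for all $\gamma\in\Gamma$, then $\mu=0$. This is the algebraic incarnation of the measure-theoretic lemma at the core of Thorisson's theorem, and this is where I expect the real work to be, and where the hypothesis that $A$ has countable joins is needed. Note first that $\mu\wedge\gamma\nu=0$ for all $\gamma$ is equivalent to $\nu\wedge\delta\mu=0$ for all $\delta$ (apply $\delta$ to both sides and use that $\delta$ is an automorphism). Now imitate Thorisson's argument: from $\nu\wedge\delta\mu=0$ one extracts, for each $\delta$, a $\nu$-null element $N_\delta\in A$ \emph{carrying} $\delta\mu$, i.e. with $(\delta\mu)(a)\le(\delta\mu)(N_\delta)$ for all $a$; producing an exactly $\nu$-null such element is done by feeding a suitable decreasing sequence (built from tails of a sequence of increasingly good decompositions) into the remainder axiom, whose conclusion furnishes the common lower bound one would otherwise get from a countable meet in a $\sigma$-algebra. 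Then $N:=\bigvee_\delta N_\delta$ is still $\nu$-null, because the $\nu$-null elements of $A$ are closed under countable joins (countable subadditivity of $\nu$), and each translate of $N$ still carries $\mu$. One then has to manufacture from $N$ a $\Gamma$-invariant element $p$ that remains $\nu$-null while still carrying $\mu$; granting this, $\mu\sim\nu$ and the invariance of $p$ force $\mu(p)=\nu(p)=0$, so $\mu(a)\le\mu(p)=0$ for all $a$ and hence $\mu=0$. I expect the main obstacle to be exactly this last construction of the $\Gamma$-invariant witness $p$: over a standard Borel $\Gamma$-space one would intersect the translates of $N$ and pass to a complement, but here $A$ is assumed only to have countable joins --- not binary meets, and not complements --- so this step, together with the surrounding bookkeeping with ``$\nu$-null'' and ``carrying'', must be carried out using countable joins and the remainder axiom alone. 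Making this work is the heart of the argument; granting it, \Cref{ThmMain} yields the corollary as above.
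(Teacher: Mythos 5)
Your setup is exactly the paper's: work in $\Hom(A,\R)$ with the action $(\gamma\mu)(a)=\mu(\gamma^{-1}a)$, define $\mu\sim\nu$ by agreement on $\Gamma$-invariant elements, and check the three hypotheses of \Cref{ThmMain}. Conditions (1) and (2) are handled correctly, and your identification of condition (3) as the entire content of the corollary is also right. But you do not prove condition (3): you sketch a plan (extract $\nu$-null carriers $N_\delta$ of each $\delta\mu$, join them, then ``manufacture'' a $\Gamma$-invariant $\nu$-null element still carrying $\mu$) and then explicitly flag the invariantization step as an obstacle you have not overcome. That step is the theorem; leaving it as ``granting it'' means the proof is not complete. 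Moreover, the route you sketch --- building a single global carrier of $\mu$ and then trying to make it invariant --- is exactly the place where the absence of meets and complements in $A$ bites, as you yourself observe.

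The paper closes this gap by a different arrangement that makes the invariant witness appear for free, and pointwise rather than globally. Fix $a\in A$. From $\mu\perp\gamma\nu$ for all $\gamma$ one gets $\mu\perp\sum_\gamma\gamma\nu$ in the CA of measures (Tarski 3.12), and then Shortt's Hahn-type decomposition lemma (\cite[1.14]{Sho90}, valid for GCAs) splits the \emph{single element} $a$ as $a=b+c$ with $\mu(b)=0$ and $\qty(\sum_\gamma\gamma\nu)(c)=0$. The latter says $\nu$ vanishes on \emph{every} translate of $c$ simultaneously, so passing to the closure $\ol A$ one gets $\nu\qty(\sum_\gamma\gamma c)=0$, hence $\nu\qty(\bigvee_\gamma\gamma c)=0$ since the join is below the sum. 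The element $\bigvee_\gamma\gamma c$ exists in $A$ by the countable-joins hypothesis and is automatically $\Gamma$-invariant (each $\gamma$ is an order-automorphism, so permutes the joinands); by $\mu\sim\nu$ it is $\mu$-null, so $\mu(c)=0$ because $c\le\bigvee_\gamma\gamma c$, and $\mu(a)=\mu(b)+\mu(c)=0$. The key idea you are missing is to take orthogonality against the summed measure $\sum_\gamma\gamma\nu$ \emph{before} decomposing, so that the decomposition already produces a piece all of whose translates are $\nu$-null; its join is then the invariant witness, with no separate construction needed. (Your direct construction of binary meets in $\Hom(A,\R)$ is a harmless variant of the paper's appeal to \cite[2.1]{Sho90} via the closure, and is not where the problem lies.)
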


We recover Thorisson's theorem by setting $A = \B(X)$ (under disjoint union).
\begin{cor}[Thorisson, {\cite[Theorem 1]{Tho96}}]
  Let $X$ be a standard Borel $\Gamma$-space and let $\mu$ and $\nu$ be finite Borel measures on $X$.
  Then $\mu$ and $\nu$ agree on every $\Gamma$-invariant subset of $X$ iff they are equidecomposable.
\end{cor}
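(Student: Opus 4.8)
The plan is to deduce this directly from \cref{CorMeas}, applied to the GCA $A = \B(X)$ of Borel subsets of $X$ with disjoint union as its partial addition. The first step is to check the hypotheses of \cref{CorMeas}. Since $\B(X)$ is a $\sigma$-algebra, it is a $\sigma$-complete Boolean algebra, so by \cite[15.24]{Tar49} it is a GCA in which $a + b$ is defined exactly when $a$ and $b$ are disjoint, with value $a\cup b$, and $\sum_n a_n$ is defined exactly when the $a_n$ are mutually disjoint, with value $\bigcup_n a_n$; the induced order is inclusion, so $\B(X)$ has all countable joins, namely countable unions. The action of $\Gamma$ on $X$ induces one on $\B(X)$ via $\gamma\cdot B := \gamma B$, and this is a GCA action because each $\gamma$ acts on $X$ as a Borel bijection and hence preserves $\emptyset$, disjointness, and countable unions. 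Thus $\B(X)$ is a $\Gamma$-GCA with countable joins, and \cref{CorMeas} applies.

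Next I would translate both sides of \cref{CorMeas} back into classical language. A homomorphism $\B(X)\to\R$ is, by definition, a map vanishing on $\emptyset$, countably additive along disjoint sequences of Borel sets, and taking values in $[0,\infty)$ --- which is exactly a finite Borel measure on $X$ --- and conversely every finite Borel measure is such a homomorphism; so ``finite measure on $\B(X)$'' and ``finite Borel measure on $X$'' coincide. (Here it matters that we use $\B(X)$ with disjoint union rather than $\mc P(X)$ with union, since the homomorphism axioms only constrain additivity along \emph{disjoint} families.) An element $B$ of $\B(X)$ is $\Gamma$-invariant in the sense of a $\Gamma$-GCA precisely when $\gamma B = B$ for all $\gamma$, i.e.\ when $B$ is a $\Gamma$-invariant subset of $X$, so ``$\mu$ and $\nu$ agree on every $\Gamma$-invariant element'' means exactly what the statement requires.

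The last step is to match equidecomposability. In \cref{CorMeas} this refers to the $\Gamma$-GCA of measures on $\B(X)$ under pointwise addition (see \cite[2.1]{Sho90}), with $\Gamma$ acting by pushforward. Under the identification of measures on $\B(X)$ with Borel measures on $X$, equidecomposability of $\mu$ and $\nu$ in that algebra says precisely that there exist Borel measures $(\mu_\gamma)_{\gamma\in\Gamma}$ on $X$ with $\mu = \sum_\gamma \mu_\gamma$ and $\nu = \sum_\gamma \gamma\mu_\gamma$, where $\gamma\mu_\gamma$ is the pushforward of $\mu_\gamma$ by $\gamma$ and both sums are taken pointwise --- exactly the notion of equidecomposability in the statement. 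The corollary then follows from \cref{CorMeas}.

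None of these steps is genuinely difficult; the argument is essentially an unwinding of definitions, and the only points that require care are (a) the identification of GCA homomorphisms $\B(X)\to\R$ with finite Borel measures, which is what forces us to take disjoint union rather than union as the operation, and (b) the verification that the abstract $\Gamma$-action on measures on $\B(X)$ is the pushforward action, so that the algebraic and classical notions of equidecomposability really do agree. It is worth noting that, unlike Thorisson's original probabilistic argument, this proof never uses that $X$ is standard Borel.
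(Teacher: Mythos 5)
Your proposal is correct and matches the paper exactly: the paper's entire proof is the one-line remark that the corollary follows from \Cref{CorMeas} by taking $A = \B(X)$ under disjoint union, and your write-up is just a careful unwinding of that same reduction. The points you flag (disjoint union versus union, and matching the pushforward action with the algebraic equidecomposability) are exactly the right things to check.
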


We also obtain a criterion for equidecomposability of subsets of a probability space.
A \textbf{probability measure preserving (pmp)} $\Gamma$-action on a standard probabability space $(X,\mu)$
is an action of $\Gamma$ on $(X,\mu)$ by measure-preserving Borel automophisms.
\begin{cor}\label{CorSets}
  Let $(X,\mu)$ be a standard probability space with a pmp $\Gamma$-action and let $A,B\in\MALG(X,\mu)$.
  Then $A$ and $B$ agree on every $\Gamma$-invariant measure $\ll\mu$ iff they are equidecomposable.\footnotemark
\end{cor}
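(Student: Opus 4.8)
The plan is to apply \Cref{ThmMain} with the $\Gamma$-GCA taken to be $\MALG(X,\mu)$ under disjoint union, and with the equivalence relation $A\sim B$ declared to hold iff $\lambda(A)=\lambda(B)$ for every $\Gamma$-invariant measure $\lambda\ll\mu$; with this choice the conclusion of \Cref{ThmMain} is verbatim the statement of the corollary. First I would dispatch the structural hypotheses: $\MALG(X,\mu)$ is cancellative (if $A\sqcup B=A$ mod $\mu$ then $B\subseteq A$ and $A\cap B=\varnothing$ mod $\mu$, so $\mu(B)=0$) and has binary meets (the order $\le$ is inclusion mod $\mu$, and $A\wedge B=A\cap B$), and a pmp action of $\Gamma$ on $(X,\mu)$ induces an action of $\Gamma$ on $\MALG(X,\mu)$ by GCA automorphisms. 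It then remains to check conditions (1)--(3) of \Cref{ThmMain}, and the first two are routine.

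For (1): if $A=\sum_\gamma A_\gamma$ and $B=\sum_\gamma \gamma A_\gamma$, then for any $\Gamma$-invariant measure $\lambda$ we have $\lambda(A)=\sum_\gamma\lambda(A_\gamma)=\sum_\gamma\lambda(\gamma A_\gamma)=\lambda(B)$ by $\sigma$-additivity and $\Gamma$-invariance of $\lambda$, so $A\sim B$. For (2): if $A\sim B$ and $A\sqcup C\sim B\sqcup D$, then for every $\Gamma$-invariant $\lambda\ll\mu$ we get $\lambda(A)=\lambda(B)$ and $\lambda(A)+\lambda(C)=\lambda(B)+\lambda(D)$; subtracting the finite quantity $\lambda(A)=\lambda(B)$ yields $\lambda(C)=\lambda(D)$, hence $C\sim D$.

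The real content is condition (3), and this is precisely where the pmp hypothesis is used. Suppose $A\sim B$ and $A\perp\gamma B$ for every $\gamma\in\Gamma$, i.e.\ $\mu(A\cap\gamma B)=0$ for all $\gamma$. Since $\Gamma$ is countable, $A$ is disjoint mod $\mu$ from the $\Gamma$-invariant set $\tilde B:=\bigcup_{\gamma\in\Gamma}\gamma B$. Because the action preserves $\mu$, the set $\tilde B$ is a well-defined $\Gamma$-invariant element of $\MALG(X,\mu)$, the restriction $\lambda:=\mu|_{\tilde B}$ is a $\Gamma$-invariant measure with $\lambda\ll\mu$, and $\mu$ itself is $\Gamma$-invariant. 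Now $\lambda(A)=\mu(A\cap\tilde B)=0$ while $\lambda(B)=\mu(B)$ (as $B\subseteq\tilde B$), so $A\sim B$ forces $\mu(B)=0$; applying $A\sim B$ once more to the invariant measure $\mu$ gives $\mu(A)=\mu(B)=0$, i.e.\ $A=0$ in $\MALG(X,\mu)$. With (1)--(3) verified, \Cref{ThmMain} gives that $A\sim B$ iff $A$ and $B$ are equidecomposable, which is the assertion. The only genuine obstacle is (3); everything else is formal, and the essential use of the measure-preserving assumption is that it makes both $\mu|_{\tilde B}$ and $\mu$ available as $\Gamma$-invariant test measures $\ll\mu$.
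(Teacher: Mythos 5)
Your proof is correct, but it takes a genuinely different route from the paper. The paper derives \Cref{CorSets} from a more general statement about projections in finite von Neumann algebras (\Cref{ThmVna}): it sets $M = L^\infty(X,\mu)$, identifies $P(M)/{\sim}_{\MvN}$ with $\MALG(X,\mu)$ as $\Gamma$-GCAs, and verifies condition 3 of \Cref{ThmMain} by cutting a faithful normal trace down by the invariant projection $\ol p = \bigvee_\gamma \gamma p$. You instead apply \Cref{ThmMain} directly to $\MALG(X,\mu)$ under disjoint union, with $\sim$ defined by agreement on invariant measures, and your verification of condition 3 via the invariant set $\tilde B = \bigcup_\gamma \gamma B$ and the test measures $\mu|_{\tilde B}$ and $\mu$ is exactly the measure-theoretic shadow of the paper's trace argument. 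Your route is more elementary and self-contained for this corollary; the paper's route buys the extra generality of \Cref{ThmVna}, which applies to noncommutative von Neumann algebras as well. One small point to tighten: in condition 2 you subtract ``the finite quantity $\lambda(A)$,'' but a measure $\lambda\ll\mu$ need not be finite, so the cancellation could fail at $\infty$. The fix is cosmetic: define $\sim$ using only \emph{finite} $\Gamma$-invariant measures $\ll\mu$ (which is all that conditions 2 and 3 need, since $\mu|_{\tilde B}$ and $\mu$ are finite), and then observe that equidecomposability implies agreement on \emph{all} invariant measures $\ll\mu$ by the $\sigma$-additivity computation in your condition 1, so the two versions of the equivalence relation coincide with the one in the statement.
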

\footnotetext{
Ruiyuan (Ronnie) Chen has pointed out that this also follows from the Becker-Kechris comparability lemma,
see \cite[4.5.1]{BK96}}

This generalizes a well-known result (for instance, see \cite[7.10]{KM04})
which says that if $\mu$ is ergodic,
then $A$ and $B$ are equidecomposable iff $\mu(A) = \mu(B)$
(note that in this case,
$\mu$ is the only $\Gamma$-invariant measure $\ll\mu$).

\Cref{CorSets} will be obtained via a more general result about projections in von Neumann algebras;
see \Cref{ThmVna} below.

\begin{rmk}
  The original result in \cite{Tho96} is stated for actions of locally compact groups,
  but it is not clear how to formulate an analogous theorem in the setting of cardinal algebras.
\end{rmk}

\subsection*{Acknowledgments}
We would like to thank Alexander Kechris for introducing the author to Thorisson's theorem and suggesting the use of cardinal algebras.
We would also like to thank Ruiyuan (Ronnie) Chen for stating Thorisson's theorem in terms of equidecomposition,
as well as for making the author aware of the Becker-Kechris lemma.

\section{Preliminaries}
 
\begin{defn}
  Let $A$ be a GCA.
  An element $a\in A$ is \textbf{cancellative}
  (called \textbf{finite} in \cite{Tar49})
  if any of the following equivalent conditions hold
  (see \cite[4.19]{Tar49} for proof of equivalence):
  \begin{enumerate}
    \item If $a + b = a + c$,
      then $b = c$.
    \item If $a +  b = a$,
      then $b = 0$.
    \item If $a + b \le a + c$,
      then $b \le c$.
  \end{enumerate}
  A GCA $A$ is \textbf{cancellative} if every $a\in A$ is cancellative.
\end{defn}

\begin{defn}
  A \textbf{closure} of a GCA $A$ is a CA $\ol A$ containing $A$ such that the following hold:
  \begin{enumerate}
    \item If $a$ and $(a_n)_n$ are in $A$,
      then $a = \sum a_n$ in $A$ iff $a = \sum_n a_n$ in $\ol A$.
    \item $A$ generates $\ol A$,
      i.e. for every $b\in A$,
      there exist $(a_n)_n$ in $A$ such that $b = \sum a_n$.
  \end{enumerate}
\end{defn}

\begin{prop}[{\cite[7.8]{Tar49}}]
  Every GCA has a closure.
\end{prop}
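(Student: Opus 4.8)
The plan is to realize $\ol A$ concretely as a quotient of the set $\Sigma$ of all sequences $(a_n)_n$ in $A$, each sequence being thought of as a formal countable sum. First I would collect the standard infinitary consequences of the GCA axioms that the construction needs: that $\sum$ is commutative and associative (iterated and re-indexed versions of a defined sum agree with it), that a subsum of a defined sum is again defined, and the countable refinement property --- if $\sum_i u_i=\sum_k v_k$ in $A$, then there is a double array $(w_{ik})_{i,k}$ in $A$ with $u_i=\sum_k w_{ik}$ and $v_k=\sum_i w_{ik}$ --- all available in \cite[\S1]{Tar49}.

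Next I would define, for $(a_n)_n,(b_n)_n\in\Sigma$, the relation $(a_n)_n\sim(b_n)_n$ to mean that there is a double array $(c_{ij})_{i,j}$ in $A$ with $a_i=\sum_j c_{ij}$ and $b_j=\sum_i c_{ij}$ for all $i,j$ (all sums taken in $A$). Reflexivity is witnessed by the array equal to $a_i$ when $i=j$ and to $0$ otherwise, symmetry by transposition, and transitivity is the substantive point: given witnesses $(c_{ij})$ for $(a_n)\sim(b_n)$ and $(d_{jk})$ for $(b_n)\sim(e_n)$, for each fixed $j$ apply countable refinement to $\sum_i c_{ij}=b_j=\sum_k d_{jk}$ to obtain $(w^j_{ik})_{i,k}$ with $\sum_k w^j_{ik}=c_{ij}$ and $\sum_i w^j_{ik}=d_{jk}$, and set $f_{ik}:=\sum_j w^j_{ik}$ (defined in $A$, being a subsum of the defined sum $\sum_{j,k}w^j_{ik}=a_i$); then $(f_{ik})$ witnesses $(a_n)\sim(e_n)$.

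I would then set $\ol A:=\Sigma/{\sim}$ with $0:=[(0,0,\dots)]$, addition $[(a_n)_n]+[(b_n)_n]:=[(a_0,b_0,a_1,b_1,\dots)]$, and countable sum $\sum_k[(a^k_n)_n]:=[(d_m)_m]$ for any fixed enumeration $(d_m)_m$ of the array $(a^k_n)_{k,n}$. These are total by construction, and well defined because $\sim$ is invariant under finite permutations and under insertion or deletion of zero terms (which one checks directly). The bulk of the work is then verifying the cardinal algebra axioms for $\ol A$: axioms (1)--(3) are reindexing bookkeeping, while the refinement axiom (4) for $\ol A$ follows by taking a witnessing array for the relevant $\sim$-relation and regrouping its entries according to the two sequences being compared. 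Finally, $\iota\colon A\to\ol A$, $a\mapsto[(a,0,0,\dots)]$, is injective and a homomorphism; one has $a=\sum_n a_n$ in $A$ iff $\iota(a)=\sum_n\iota(a_n)$ in $\ol A$ (both implications come straight out of the definition of $\sim$), and $[(a_n)_n]=\sum_n\iota(a_n)$ in $\ol A$, so $A$ generates $\ol A$; identifying $A$ with $\iota(A)$ exhibits $\ol A$ as a closure of $A$.

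The main obstacle is the remainder axiom (5) for $\ol A$ (and, on a smaller scale, the countable refinement lemma it ultimately rests on): given a descending chain $A_n=B_n+A_{n+1}$ in $\ol A$, one must produce the ``limit'' $C=\bigwedge_n A_n$ with $A_n=C+\sum_{i\ge n}B_i$, and this $C$ is genuinely a new element not visibly present at any finite stage. I would unwind the chain into a web of $\sim$-relations among chosen representatives, use refinement repeatedly to reorganize the relevant summands of $A$ into a single array, and then invoke the remainder axiom of $A$ along a diagonal to manufacture a representative of $C$. This is the one place where the remainder axiom of $A$, rather than merely the refinement axiom, is essential, mirroring the fact that it is the axiom responsible for the existence of limits of descending chains.
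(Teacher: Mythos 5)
The paper does not prove this proposition; it simply cites \cite[7.8]{Tar49}, and your construction is precisely the one carried out there: formal countable sums, i.e.\ sequences in $A$, modulo the equivalence given by a common refinement matrix. The parts you actually execute are correct. Reflexivity, symmetry, and your transitivity argument via the countable refinement lemma are right (the sums $f_{ik}:=\sum_j w^j_{ik}$ are indeed defined as subsums of the defined sum $\sum_{j,k}w^j_{ik}=a_i$, and the interchange-of-summation steps are covered by Tarski's general associativity results in \S1--2); the operations are well defined because $\sim$ is invariant under arbitrary (not just finite) permutations and under padding by zeros, with block-diagonal witnessing matrices handling compatibility with $+$ and $\sum$; axioms (1)--(3) and the refinement axiom for $\ol A$ go through by the regrouping you describe; and the verification that $\iota$ is an embedding preserving and reflecting sums, and that $A$ generates $\ol A$, is exactly as you say.

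The one place where your text is a plan rather than a proof is the remainder axiom for $\ol A$, and you should be aware that this is not a small tail to the argument but the bulk of the work in Tarski's \S 7. Concretely, given $A_n=B_n+A_{n+1}$ in $\ol A$, after choosing representatives and refining you obtain, for each coordinate of a representative of $A_0$, a descending web of partial decompositions, and you must (i) organize these so that the remainder axiom of $A$ can be applied coordinatewise, (ii) assemble the resulting coordinatewise remainders into a single sequence representing a candidate $C$, and (iii) verify that this $C$ satisfies $A_n=C+\sum_{i\ge n}B_i$ for \emph{every} $n$ simultaneously, which requires showing that the remainders produced at different stages cohere (this is where the ``for each $n$'' clause in the remainder axiom of $A$, not just its existence claim for $n=0$, gets used). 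Your phrase ``invoke the remainder axiom of $A$ along a diagonal'' names the right tool, and the strategy does work, but none of steps (i)--(iii) is automatic; as written the proposal is an accurate roadmap of the cited proof with its hardest step deferred rather than a complete argument.
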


\begin{eg}
  \leavevmode
  \begin{itemize}
    \item $\ol\N$ is the set of extended natural numbers $\{0,1,2,\ldots,\infty\}$.
    \item $\ol\R$ is the extended real line $[0,\infty]$.
  \end{itemize}
\end{eg}

The following is easy to verify.
\begin{prop}\label{Reflective}
  If $A$ is a GCA with closure $\ol A$ and $B$ is a CA,
  then every homomorphism $A\to B$ extends uniquely to a homomorphism $\ol A\to B$.
\end{prop}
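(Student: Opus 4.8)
The plan is to define the extension $\ol\phi\colon\ol A\to B$ by summation, which is in fact the only possibility. Given a homomorphism $\phi\colon A\to B$, for $b\in\ol A$ I would choose, using that $A$ generates $\ol A$, a sequence $(a_n)_n$ in $A$ with $b=\sum_n a_n$, and set $\ol\phi(b):=\sum_n\phi(a_n)$, the sum being taken in $B$; this is legitimate since $B$ is a CA, so the sum exists. I would first observe that this formula is forced — any homomorphic extension $\psi$ of $\phi$ must satisfy $\psi(b)=\sum_n\psi(a_n)=\sum_n\phi(a_n)$ — so uniqueness is immediate, and the real content is to show that $\ol\phi$ is a well-defined homomorphism.

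The heart of the matter is well-definedness: if $b=\sum_n a_n=\sum_m a_m'$ with all $a_n,a_m'\in A$, I must check that $\sum_n\phi(a_n)=\sum_m\phi(a_m')$ in $B$. Here I would appeal to a general refinement theorem for cardinal algebras, applied in $\ol A$ (see \cite{Tar49}): there are $c_{nm}\in\ol A$ with $a_n=\sum_m c_{nm}$ for every $n$ and $a_m'=\sum_n c_{nm}$ for every $m$. Since the $c_{nm}$ need not lie in $A$, I would again use that $A$ generates $\ol A$ to write each $c_{nm}=\sum_l d_{nml}$ with $d_{nml}\in A$; then, by the associative law for countable sums, $a_n=\sum_{m,l}d_{nml}$ in $\ol A$, and since all the terms now lie in $A$, closure property (1) promotes this to the identity $a_n=\sum_{m,l}d_{nml}$ in $A$, whence $\phi(a_n)=\sum_{m,l}\phi(d_{nml})$ in $B$. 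Symmetrically $\phi(a_m')=\sum_{n,l}\phi(d_{nml})$ in $B$, and rearranging the triple sum in the CA $B$ gives $\sum_n\phi(a_n)=\sum_{n,m,l}\phi(d_{nml})=\sum_m\phi(a_m')$. This is the step I expect to carry all the weight — producing a common refinement and then pushing it down into $A$. If one prefers not to invoke the full refinement theorem, then iterating the refinement axiom (4) and applying the remainder axiom (5) yields a one-sided refinement $a_n=\sum_m c_{nm}$, $a_m'=e_m+\sum_n c_{nm}$; the same pushdown then gives only $\sum_n\phi(a_n)\le\sum_m\phi(a_m')$, but the reverse inequality follows by symmetry, and $\le$ is a partial order in $B$.

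It then remains to verify that $\ol\phi$ is a homomorphism extending $\phi$, and here I expect only routine bookkeeping. That $\ol\phi$ extends $\phi$: for $a\in A$, choosing $(a_n)_n$ in $A$ with $a=\sum_n a_n$ in $\ol A$, this holds in $A$ by closure property (1), so $\ol\phi(a)=\sum_n\phi(a_n)=\phi(a)$; in particular $\ol\phi(0)=0$. Additivity: given $b=b'+b''$, pick representations $b'=\sum_n a_n'$ and $b''=\sum_n a_n''$ by elements of $A$; then $b$ is the sum of the interleaved sequence $a_0',a_0'',a_1',a_1'',\dots$ (by GCA axiom (2) and the associative law in $\ol A$), so computing $\ol\phi(b)$ from this representation and repeating the manipulation in $B$ gives $\ol\phi(b)=\sum_n\phi(a_n')+\sum_n\phi(a_n'')=\ol\phi(b')+\ol\phi(b'')$. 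Countable additivity is identical: from $b=\sum_k b_k$ and $b_k=\sum_n a_{kn}$ one gets $b=\sum_{k,n}a_{kn}$ in $\ol A$, whence $\ol\phi(b)=\sum_{k,n}\phi(a_{kn})=\sum_k\ol\phi(b_k)$. Every step here is an instance of transporting a countable-sum identity between $A$, $\ol A$ and $B$ using the two defining properties of a closure together with the commutative and associative laws valid in any cardinal algebra.
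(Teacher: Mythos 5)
Your proof is correct; the paper states this proposition without proof (``easy to verify''), and your argument --- defining $\ol\phi$ by summation over a generating representation, noting that this formula is forced so uniqueness is automatic, establishing well-definedness via the countable refinement theorem in the CA $\ol A$ pushed down into $A$ by closure property (1), and then checking the homomorphism axioms by reindexing --- is exactly the verification being left to the reader. The only step with real content is well-definedness, and you have identified and handled it correctly.
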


\begin{rmk}
  This shows that the closure is left adjoint to the forgetful functor
  from the category of CAs to the category of GCAs,
  so in particular,
  the closure is unique up to isomorphism.
\end{rmk}

Let $\Hom(A,B)$ denote the set of all homomorphisms from $A$ to $B$.
\begin{thm}
  Let $A$ be a GCA.
  Then $\Hom(A,\R)$ is a cancellative GCA with binary meets
  (under pointwise addition).
\end{thm}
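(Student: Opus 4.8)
The plan is to verify the GCA axioms for $\Hom(A,\R)$ under pointwise addition, then separately check cancellativity and the existence of binary meets. For the algebraic structure: given homomorphisms $\phi,\psi\colon A\to\R$, define $(\phi+\psi)(a) := \phi(a)+\psi(a)$, and given a sequence $(\phi_n)_n$, define $(\sum_n\phi_n)(a) := \sum_n\phi_n(a)$, the latter computed in $\R$ (which may be infinite, but since $\R$ is not a CA one must be careful — in fact $\sum_n\phi_n$ is defined as an element of $\Hom(A,\R)$ precisely when this pointwise sum lands in $\R$, i.e.\ is finite, for every $a\in A$). One then checks these are again homomorphisms: additivity and the $\sum$-condition for $\phi+\psi$ follow by rearranging finite and countable sums in $\R$; for a countable sum $\sum_n\phi_n$ one needs that if $a=\sum_m a_m$ in $A$ then $\sum_m(\sum_n\phi_n)(a_m)=(\sum_n\phi_n)(a)$, which is the Tonelli-style interchange $\sum_m\sum_n\phi_n(a_m)=\sum_n\sum_m\phi_n(a_m)$ of a double series of non-negative reals — always valid. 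Axioms (1)–(3) of a GCA (the unrolling identity $\sum_n a_n=a_0+\sum_{n\geq1}a_n$, additivity of $\sum$ over sums, and the unit law for $0$, where $0$ is the zero homomorphism) are then immediate from the corresponding facts in $\R$, applied pointwise.

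The substantive axioms are Refinement (4) and Remainder (5). For Refinement, suppose $\phi+\psi=\sum_n\chi_n$ in $\Hom(A,\R)$; I want homomorphisms $\phi_n,\psi_n$ with $\phi=\sum_n\phi_n$, $\psi=\sum_n\psi_n$, and $\chi_n=\phi_n+\psi_n$. The naive pointwise attempt fails because one cannot split $\chi_n(a)$ coherently across all $a$ at once. Instead I would use the refinement axiom inside $A$ itself: apply \Cref{Reflective} to pass to the closure $\ol A$, or more directly, exploit that $A$ is a GCA so that for each $a$ with $\phi(a)+\psi(a)=\sum_n\chi_n(a)$ we have enough structure. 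Actually the clean route is: since $\R$ is a GCA, and refinement/remainder are statements about the \emph{codomain}, one checks them by working pointwise in $\R$ where they hold, but the difficulty is assembling a pointwise family of splittings into a genuine homomorphism. The honest approach is to use that $\Hom(A,\R)$ is known to embed into a product of copies of $\R$ in a way compatible enough with the operations — but products of GCAs need not be GCAs. So the real argument must invoke a representation: regard $\phi\in\Hom(A,\R)$ as a "measure," and use that the partial order and the sum on $\Hom(A,\R)$ are determined pointwise, so that $\chi_n\leq\phi+\psi$ coordinatewise; then set $\phi_n := \chi_n\wedge\phi$ suitably truncated — here binary meets in $\Hom(A,\R)$, which we must construct first, do the work of splitting. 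Concretely, once binary meets exist, a standard cardinal-algebra argument (the proof that measures form a CA, \cite[2.1]{Sho90}) carries over.

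So I would reorder: first establish binary meets in $\Hom(A,\R)$. Given $\phi,\psi\colon A\to\R$, define $(\phi\wedge\psi)(a) := \inf\{\phi(a')+\psi(a'')\colon a=a'+a''\}$; show this is well-defined (the infimum is over a nonempty set, bounded below by $0$), that it is a homomorphism (additivity and countable additivity require a covering/refinement argument in $A$: splitting $a=b+c$ and taking near-optimal decompositions of $b$ and $c$, then using the refinement axiom to recombine), and that it is the greatest lower bound of $\phi,\psi$ with respect to $\le$ on $\Hom(A,\R)$. With binary meets in hand, Refinement and Remainder for $\Hom(A,\R)$ follow by transcribing Shortt's proof that the CA of measures satisfies them — the key being that all the required suprema and limits are computed pointwise in $\R$, and meets supply the needed "intersections." Finally, cancellativity is easy: if $\phi+\psi=\phi$ in $\Hom(A,\R)$ then $\phi(a)+\psi(a)=\phi(a)$ in $\R$ for every $a$; since every element of $\R$ is cancellative (finite real numbers cancel), $\psi(a)=0$ for all $a$, so $\psi=0$.

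I expect the main obstacle to be the countable additivity of $\phi\wedge\psi$ and, relatedly, the Remainder axiom: both require moving an infimum or a limit past a countable sum, and making that rigorous needs the refinement and remainder axioms \emph{of $A$} to produce coherent decompositions witnessing the near-optimal splittings — a $\varepsilon/2^n$ bookkeeping argument combined with \cite[1.38]{Tar49}-style rearrangement. Everything else is routine pointwise verification in $\R$.
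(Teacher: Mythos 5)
Your overall strategy --- direct verification: define the operations pointwise, construct binary meets by the formula $(\phi\wedge\psi)(a)=\inf\{\phi(a')+\psi(a''):a=a'+a''\}$, then derive Refinement and Remainder --- is viable, and is essentially Shortt's original argument that measures on a cardinal algebra form a cardinal algebra. But as written it is a plan rather than a proof. The three steps that carry all the content are exactly the ones you defer: (i) that $\phi\wedge\psi$ is a homomorphism (in particular countably additive), (ii) Refinement, and (iii) Remainder. You correctly diagnose that each requires interchanging an infimum or a limit with a countable sum via an $\varepsilon/2^n$ argument using the refinement axiom of $A$, but you do not carry any of these out, and your discussion of Refinement circles through several abandoned attempts before settling on ``transcribe Shortt's proof.'' Since the statement being proved is precisely that these axioms hold, leaving them at the level of ``a standard cardinal-algebra argument carries over'' is a genuine gap. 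The only part you actually complete is cancellativity, which is correct.

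The paper avoids all of this by reducing to cited results: by \cite[2.1]{Sho90}, $\Hom(\ol A,\ol\R)$ is a CA with binary meets; by \Cref{Reflective} this is isomorphic to $\Hom(A,\ol\R)$; and $\Hom(A,\R)$ is $\le$-downward closed in $\Hom(A,\ol\R)$, hence a GCA with binary meets by \cite[9.18(i)]{Tar49}. Note the role of $\ol\R=[0,\infty]$ here: for $\ol\R$-valued homomorphisms every sum is defined, so one gets a genuine CA to which Shortt's theorem applies verbatim, and the partiality of $\sum$ on $\Hom(A,\R)$ --- which forces you to track definedness throughout your verification --- is absorbed in one stroke by the downward-closure lemma. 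If you want to salvage your direct approach, the cleanest fix is to adopt the same two-step structure: first establish the statement for $\ol\R$-valued homomorphisms (where the interchange arguments you sketch are cleanest, since convergence is automatic), then restrict to the downward-closed subset $\Hom(A,\R)$.
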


\begin{proof}
  By \cite[2.1]{Sho90},
  $\Hom(\ol A,\ol\R)$ is a CA with binary meets,
  so $\Hom(A,\ol \R)$ is also CA with binary meets,
  since it is isomorphic to $\Hom(\ol A,\ol\R)$ by \Cref{Reflective}.
  Thus since $\Hom(A,\R)$ is closed $\le$-downwards in $\Hom(A,\ol\R)$,
  it is a GCA by \cite[9.18(i)]{Tar49},
  and it has binary meets.
  The cancellativity of $\Hom(A,\R)$ follows immediately from cancellativity of $\R$.
\end{proof}

\section{The main theorem}

We turn to the proof of the main theorem.
\begin{proof}[Proof of \Cref{ThmMain}]
  Fix an enumeration $(\gamma_n)_n$ of $\Gamma$.
  Suppose $a\sim b$.
  We define sequences $(a_n)$ and $(b_n)$ recursively as follows.
  Let $a_0 = a$ and $b_0 = b$.
  For the inductive step,
  choose $a_{n+1}$ and $b_{n+1}$ such that
  \begin{align*}
    a_n & = a_{n+1} + a_n\wedge \gamma_n b_n \\
    b_n & = b_{n+1} + \gamma_n^{-1} a_n\wedge b_n
  \end{align*}
  By the remainder axiom,
  there are some $a_\infty$ and $b_\infty$ such that for any $n$,
  we have
  \begin{align*}
    a_n & = a_\infty + \sum_{i\ge n} a_i\wedge \gamma_i b_i \\
    b_n & = b_\infty + \sum_{i\ge n} \gamma_i^{-1} a_i\wedge b_i
  \end{align*}
  In particular,
  we have
  \begin{align*}
    a & = a_\infty + \sum_n a_n\wedge \gamma_n b_n \\
    b & = b_\infty + \sum_n \gamma_n^{-1} a_n\wedge b_n
  \end{align*}
  Thus to show that $a$ and $b$ are equidecomposable,
  it suffices to show that $a_\infty = b_\infty = 0$.

  Now $a_\infty\sim b_\infty$ by the first condition,
  since $a\sim b$ and $\sum_n a_n\wedge \gamma_n b_n\sim \sum_n \gamma_n^{-1} a_n\wedge b_n$
  (by equidecomposability).
  Now for any $n$,
  we have $b_n \ge b_\infty + \gamma_n^{-1} a_n\wedge b_n$,
  and thus
  \[
    \gamma_n b_n
    \ge \gamma_n b_\infty + a_n\wedge \gamma_n b_n
    \ge a_\infty\wedge \gamma_n b_\infty + a_n\wedge \gamma_n b_n
  \]
  We also have
  \[
    a_n
    \ge a_\infty + a_n\wedge \gamma_n b_n
    \ge a_\infty\wedge \gamma_n b_\infty + a_n\wedge \gamma_n b_n
  \]
  Thus
  \[
    a_n\wedge\gamma_n b_n
    \ge a_\infty\wedge \gamma_n b_\infty + a_n\wedge \gamma_n b_n
  \]
  Since $A$ is cancellative,
  we have $0\ge a_\infty\wedge\gamma_n b_\infty$,
  i.e. $a_\infty\wedge \gamma_n b_\infty = 0$.
  Thus $a_\infty\perp \gamma b_\infty$ for every $\gamma\in\Gamma$.
  Thus by our hypothesis,
  we have $a_\infty = 0$ and $b_\infty = 0$.
\end{proof}

\section{Applications}
We apply this theorem to prove \Cref{CorMeas}:

\begin{proof}
  Recall that $\Hom(A,\R)$ is a cancellative GCA with binary meets,
  and it has a $\Gamma$-action given by $(\gamma\mu)(a):=\mu(\gamma^{-1}a)$.
  Define the equivalence relation $\sim$ on $\Hom(A,\R)$
  by setting $\mu\sim\nu$ iff $\mu(a) = \nu(a)$ for every $\Gamma$-invariant $a\in A$.
  It suffices to check the conditions in \Cref{ThmMain}.
  Conditions 1 and 2 are clear.
  For condition 3,
  suppose that $\mu\sim\nu$ and $\mu\perp \gamma\nu$ for every $\gamma\in \Gamma$,
  and fix $a\in A$.
  We must show that $\mu(a) = 0$.
  By \cite[3.12]{Tar49},
  we have $\mu\perp\sum \gamma\nu$,
  and thus by \cite[1.14]{Sho90}
  (which is stated for CAs,
  but whose proof works without modification for GCAs),
  we can write $a = b + c$ with $\mu(b) = 0$ and $\qty(\sum \gamma\nu)(c) = 0$.
  Identifying $\nu$ with its extension $\ol A\to\ol\R$,
  we have $\nu(\sum\gamma c) = 0$.
  Thus $\nu(\bigvee \gamma c) = 0$,
  so since $\mu\sim\nu$,
  we have $\mu(\bigvee\gamma c) = 0$.
  Thus $\mu(c) = 0$,
  and thus $\mu(a) = \mu(b) + \mu(c) = 0$.
\end{proof}

Next we recall some notions from the theory of operator algebras.
A \textbf{von Neumann algebra} is a weakly closed $*$-subalgebra $M$ of $B(H)$ containing the identity.
An element $x\in M$ is \textbf{positive} if $x = yy^*$ for some $y\in M$,
and the set of positive elements is denoted $M_+$.
There is a partial order on $M$ defined by setting $x\le y$ iff $y-x$ is positive.
An element $p\in M$ is a \textbf{projection} if $p = p^* = p^2$,
and two projections $p$ and $q$ are \textbf{Murray-von Neumann equivalent},
written $p\sim_{\MvN} q$,
if there is some $u\in M$ such that $p = uu^*$ and $q = u^*u$.
Let $P(M)$ denote the set of projections in $M$.
Then $P(M)/{\sim}_{\MvN}$ is a lattice.
A projection $p$ is \textbf{finite} if for any projection $p'$,
if $p\sim_{\MvN} p'\le p$,
then $p = p'$.
A von Neumann algebra $M$ is \textbf{finite} if $1_M$ is a finite projection.
A \textbf{trace} on $M$ is a map $\tau:M_+\to\ol\R$ such that $\tau(mm^*) = \tau(m^*m)$,
and a trace is \textbf{finite} if its image is contained in $\R$.
A trace is \textbf{faithful} if $\tau(m) = 0$ implies $m = 0$,
and a trace is \textbf{normal} if it is weakly continuous.

If $M$ is a von Neumann algebra,
then $P(M)/{\sim}_{\MvN}$ is a GCA under join of orthogonal projections \cite{Fil65},
and if $M$ is finite,
then this GCA is cancellative.
A \textbf{$\Gamma$-action} on a von Neumann algebra $M$
is an action of $\Gamma$ on $M$ by weakly continuous $(+,0,\cdot,1,*)$-homomorphisms.
Every von Neumann algebra $M$ with a $\Gamma$-action gives rise to a $\Gamma$-GCA,
and a trace $\tau$ on $M$ is said to be \textbf{$\Gamma$-invariant}
if $\tau(\gamma m) = \tau(m)$ for every $m\in M$ and $\gamma\in\Gamma$.

\begin{thm}\label{ThmVna}
  Let $M$ be a finite von Neumann algebra with a $\Gamma$-action
  which admits a faithful normal finite $\Gamma$-invariant trace,
  and let $[p],[q]\in P(M)/{\sim}_{\MvN}$.
  Then $[p]$ and $[q]$ agree on every finite $\Gamma$-invariant trace on $M$
  iff they are equidecomposable.
\end{thm}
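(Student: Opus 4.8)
The plan is to deduce \Cref{ThmVna} from \Cref{ThmMain} by taking $A = P(M)/{\sim}_{\MvN}$ and letting $\sim$ be the relation ``$[p]$ and $[q]$ agree on every finite $\Gamma$-invariant trace on $M$''. We already know from the discussion preceding the statement that $A$ is a cancellative $\Gamma$-GCA (finiteness of $M$ gives cancellativity; the $\Gamma$-action on $M$ descends to $A$), and that $P(M)/{\sim}_{\MvN}$ is a lattice, so in particular $A$ has binary meets. Thus it suffices to verify the three hypotheses of \Cref{ThmMain} for this choice of $\sim$. The first is immediate: if $[p]$ and $[q]$ are equidecomposable, then $[p] = \sum_\gamma [p_\gamma]$ and $[q] = \sum_\gamma \gamma[p_\gamma]$ for some orthogonal families, so for any $\Gamma$-invariant finite trace $\tau$ we get $\tau(p) = \sum_\gamma \tau(p_\gamma) = \sum_\gamma \tau(\gamma p_\gamma) = \tau(q)$, using normality/additivity of $\tau$ over orthogonal sums and $\Gamma$-invariance. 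The second is also routine: if $[a]\sim[b]$ and $[a]+[c]\sim[b]+[d]$ then for each $\Gamma$-invariant $\tau$, additivity and cancellation in $\ol\R$ give $\tau(c)=\tau(d)$ (one must be slightly careful, but the faithful normal finite invariant trace guarantees everything in sight is finite, so cancellation is legitimate).

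The real work is hypothesis 3: if $[p]\sim[q]$ and $[p]\perp\gamma[q]$ in $A$ for every $\gamma\in\Gamma$, then $[p]=0$. The first step is to translate orthogonality in $P(M)/{\sim}_{\MvN}$ back into von Neumann algebra language. Since $P(M)/{\sim}_{\MvN}$ is the lattice of projections modulo Murray-von Neumann equivalence, with the GCA structure coming from join of orthogonal projections, $[p]\wedge\gamma[q] = 0$ should mean that no nonzero subprojection of $p$ is Murray-von Neumann equivalent to a subprojection of $\gamma q$. The natural tool here is the central carrier (central support): $[p]\perp[q]$ in this quotient is equivalent to $p$ and $q$ having orthogonal central carriers, equivalently $pMq = 0$. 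So the hypothesis says the central carriers $z(p)$ and $z(\gamma q) = \gamma z(q)$ are orthogonal for every $\gamma$. The plan is then to let $z = \bigvee_\gamma \gamma z(q)$, which is a $\Gamma$-invariant central projection, and observe $z(p) \perp z$, so in particular $pz = 0$ and $qz = q$. Now restrict the faithful normal finite invariant trace $\tau_0$ (which exists by hypothesis) to the corner $zMz$: this gives a $\Gamma$-invariant finite trace on $M$ (extend by zero off $zMz$) which sees $q$ fully but kills $p$, i.e. $\tau_0(zpz) = 0$ while $\tau_0(zqz) = \tau_0(q)$. Since $[p]\sim[q]$, these agree, forcing $\tau_0(q) = 0$; faithfulness of $\tau_0$ gives $q = 0$, hence $[p] = 0$ (as $[p]\perp[q]$... more directly, $[p]\sim[q]=0$ and hypothesis 2 with $c=[p],d=0$ gives $[p]=0$, or just note $\tau_0(p)=\tau_0(q)=0$ so $p=0$).

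The main obstacle I anticipate is getting the dictionary between $\perp$ in the GCA $P(M)/{\sim}_{\MvN}$ and central carriers exactly right, and verifying that ``extend a trace on a central corner by zero'' genuinely produces a trace in the paper's sense (a map $M_+\to\ol\R$ with $\tau(mm^*)=\tau(m^*m)$) that is finite and $\Gamma$-invariant — the $\Gamma$-invariance is where one uses that $z$ was built as a $\Gamma$-invariant join. A secondary subtlety is that one should confirm binary meets in $A$ are what one expects: the comparison theorem for finite von Neumann algebras says any two projections have subprojections that are ``disjoint parts plus a common equivalent part'', which is precisely what makes $P(M)/{\sim}_{\MvN}$ a lattice and identifies the meet; once that is pinned down, $[p]\wedge[q]=0$ unwinds to $pMq=0$ as claimed. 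Everything else is bookkeeping already licensed by \Cref{ThmMain}.
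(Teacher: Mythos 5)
Your proposal is correct and follows essentially the same route as the paper: apply \Cref{ThmMain} to $A = P(M)/{\sim}_{\MvN}$ with the same relation $\sim$, treat conditions 1 and 2 as routine, and verify condition 3 by cutting the given faithful normal finite $\Gamma$-invariant trace down to a $\Gamma$-invariant projection that dominates all translates of one of $p,q$ and is orthogonal to the other. The only difference is that the paper cuts down by $\ol p=\bigvee_\gamma\gamma p$ directly while you cut down by the central projection $\bigvee_\gamma\gamma z(q)$; your version is if anything slightly more careful, since centrality of the cut-down projection $z$ is exactly what makes $m\mapsto\tau(zmz)$ a trace.
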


\begin{proof}
  Let $A = P(M)/{\sim}_\MvN$,
  which is a cancellative $\Gamma$-GCA with binary meets.
  Now define the equivalence relation on $A$ by setting
  $[p]\sim[q]$ if $[p]$ and $[q]$ agree on every $\Gamma$-invariant trace on $M$.
  It suffices to check the conditions in \Cref{ThmMain}.
  Conditions 1 and 2 are clear.
  For condition 3,
  suppose that $[p]\sim[q]$ and $[p]\perp\gamma[q]$ for every $\gamma\in\Gamma$,
  and fix a faithful normal finite $\Gamma$-invariant trace $\tau$ on $M$.
  Then setting $\ol p = \bigvee\gamma p$,
  the map $m\mapsto\tau(\ol p m\ol p)$ is a finite $\Gamma$-invariant trace on $M$.
  Since $\tau(\ol pq\ol p) = 0$ and $p\sim q$,
  we have $\tau(p) = \tau(\ol pp\ol p) = 0$.
  Thus $p = 0$.
\end{proof}

\Cref{CorSets} follows by applying this to $L^\infty(X,\mu)$.
\begin{proof}[Proof of \Cref{CorSets}]
  Let $M = L^\infty(X,\mu)$.
  This is a finite $\Gamma$-von Neumann algebra and
  $\mu$ induces a faithful normal finite $\Gamma$-invariant trace on $M$.
  Now $P(M)/{\sim}_{\MvN}$ is isomorphic to $\MALG(X,\mu)$ as a lattice (with $\Gamma$-action),
  so they give rise to isomorphic $\Gamma$-GCAs,
  and thus we are done by \Cref{ThmVna}.
\end{proof}

\bibliography{equidecomp_cardalg}
\bibliographystyle{abstract}
\end{document}